\documentclass{svproc}

\usepackage{type1cm}        
\usepackage{graphicx}      

\usepackage[bottom]{footmisc}

\usepackage{nicefrac}
\usepackage{mathrsfs}
\usepackage{amsmath,amssymb,amscd} 
\usepackage{mathtools}
\usepackage{comment}
\usepackage{bbm}
\usepackage{enumitem}
\usepackage{caption} 
\usepackage{tikz}
\usetikzlibrary{calc,decorations.markings,arrows.meta,positioning}

\newcommand{\initialize}{
    \coordinate (ll) at (-1.5,-.75);
    \coordinate (ur) at (1.5,.75);
    \coordinate (basec) at (0,-.75);
    \coordinate (Adelim) at ($(ll)!.8!(ur)$);
    \coordinate (All) at (ll -| Adelim);
    \coordinate (Aul) at (Adelim |- ur);
    \coordinate (Ac) at ($(All)!.5!(ur)$);
    \coordinate (ATdelim) at ($(ll)!.5!(ur)$);
    \coordinate (ATlr) at (ll -| ATdelim);
    \coordinate (ATur) at (ATdelim |- ur);
    \coordinate (ATc) at ($(ll)!.5!(ATur)$);
    \coordinate (cATc) at ($(ATlr)!.5!(Aul)$);
    \coordinate (UTc) at (cATc);
    \coordinate (WTdelim) at ($(ll)!.25!(ur)$);
    \coordinate (WTlr) at (ll -| WTdelim);
    \coordinate (WTur) at (WTdelim |- ur);
    \coordinate (WTc) at ($(ll)!.5!(WTur)$);
    \coordinate (cWTc) at ($(WTlr)!.5!(Aul)$);
    \coordinate (VTc) at (cWTc);
}
\newcommand{\pspace}[1][]{
    \draw[thick,#1] (ll) rectangle (ur);
}

\newcommand{\T}{\mathcal{T}}

\newcommand{\ulp}{\underline{p}}
\newcommand{\olp}{\overline{p}}
\newcommand{\ulh}{\underline{h}}
\newcommand{\olh}{\overline{h}}
\newcommand{\X}{\mathcal{X}}

\newcommand{\N}{\mathbb{N}}
\newcommand{\R}{\mathbb{R}}

\newcommand{\one}{\mathbbm{1}}
\newcommand{\PP}{\mathbb{P}}

\newcommand{\U}{\mathscr{U}}
\newcommand{\V}{\mathscr{V}}

\newcommand{\W}{\mathscr{W}}

\newcommand{\A}{\mathscr{A}}

\newcommand{\Pcal}{\mathcal{P}}
\newcommand{\EE}{\mathbb{E}}

\newcommand{\uppereach}{ \rightharpoonup }
\newcommand{\lowereach}{ \rightharpoondown 
  }
  \newcommand{\ch}{\operatorname{co}}
\newcommand{\notuppereach}{\not\rightharpoonup}
\newcommand{\notlowereach}{ \not\rightharpoondown
  }

\usepackage[numbers]{natbib}

\usepackage{newtxmath}      

\usepackage{url}

\begin{document}
	\mainmatter          
	\title{Relation between Hitting Times and Probabilities for Imprecise Markov Chains}
	\titlerunning{Relation between Hitting Times and Probabilities for IMCs}  
	\author{Marco Sangalli \and Erik Quaeghebeur \and Thomas Krak}
	\authorrunning{Marco Sangalli et al.} 
	\tocauthor{Marco Sangalli, Erik Quaeghebeur, Thomas Krak }
	\institute{Eindhoven University of Technology, Eindhoven,\\
		\email{m.sangalli@tue.nl}}
	
	\maketitle           
\begin{abstract}
    In the present paper, we investigate the relationship between hitting times and hitting probabilities in discrete-time imprecise Markov chains (IMCs). We define lower and upper hitting times and probabilities for IMCs whose set of transition matrices $\T$ is compact, convex, and has separately specified rows. Building on reachability-based partitions of the state space, we prove two key implications: (i) finiteness of the upper expected hitting time entails the lower hitting probability equals one, and (ii) finiteness of the lower expected hitting time entails the upper hitting probability equals one. We further show an equivalence: the upper expected hitting time is finite if and only if the lower hitting probability is one. Finally, by presenting a counterexample, we show that the converse of the second implication can fail.
		\keywords{Markov chain, Imprecise Markov chain, Imprecise probability, Hitting time, Hitting probability}
	\end{abstract}
    \section{Introduction}
    Hitting times and hitting probabilities are two complementary lenses through which one studies reachability in stochastic systems. The former captures how long it takes a process to reach a designated target set, while the latter captures the chance that the process ever does so. In the classical, precise theory of homogeneous Markov chains on a finite state space these two notions are tightly linked: finiteness of the expected hitting time is equivalent to the hitting probability being one~\cite{levin2009markov, norrisbook1997markov}.

    When uncertainty is present and dynamics are described not by a single transition matrix but by a set of transition matrices $\T$, the relationship between hitting times and probabilities is substantially more subtle. Replacing (precise) expectations and probabilities with lower and upper expectations and probabilities naturally produces four distinct objects for each initial state: lower and upper expected hitting times, and lower and upper hitting probabilities~\cite{krak2019hitting, krak2021comphit}. Even though some one-way implications survive, the neat equivalences from the precise case need not hold in the imprecise framework. Understanding which implications persist and which fail is both theoretically interesting and practically important for robust reachability analysis under model uncertainty.

    A fruitful approach to understanding these phenomena in discrete-time imprecise Markov chains (IMCs) is to partition the state space according to reachability properties relative to $\T$. Following the work of Sangalli et al.~\cite{sangalli2025computing, sangalli2025meeting}, one distinguishes states that never lower reach the target, $\A_\T$, states that never upper reach it, $\W_\T$, and further classes $\U_\T$ and $\V_\T$ that capture intermediate behaviours. These reachability-based classes allow a clean characterisation of when lower and upper hitting times are finite and when lower and upper hitting probabilities are strictly positive.

    This paper completes and sharpens this line of investigation proving that the upper expected hitting time is finite if and only if the lower hitting probability equals one. We then show a counterexample demonstrating that the equivalence is lost if “lower” and “upper” are swapped.

    The remainder of the paper is structured as follows: Section~\ref{sec:hmcs} and \ref{sec:expected} recall basic notation and classical results on hitting times and hitting probabilities for homogeneous Markov chains. Section~\ref{sec:IMCs} introduces imprecise Markov chains and lower and upper hitting times and probabilities. 
     Section~\ref{sec:relation} discusses the relation between hitting times and hitting probabilities in this imprecise setting.
    
    \section{Preliminaries}\label{sec:preli}
    In this section, we present all the preliminary concepts needed to study expected hitting times and hitting probabilities for IMCs.
    \subsection{Stochastic Processes and Markov Chains}\label{sec:hmcs} 
    Let $\N$ denote the positive integers and set $\N_0 := \N\cup\{0\}$.
    A discrete-time stochastic process taking values in $\X$ is a sequence of $\X$-valued random variables $(X_n)_{n\in\N_0}$; its probability law will be denoted by $\PP_X$.
    The process $(X_n)_{n\in\N_0}$ is called a \textit{Markov chain} if it satisfies 
\begin{equation}\label{eq:markovprop}
    \PP_X(X_{n+1}=x_{n+1}\mid X_{0:n}=x_{0:n})=\PP_X(X_{n+1}=x_{n+1}\mid X_{n}=x_{n})
\end{equation}
for all $x_0, \dots, x_n,x_{n+1} \in \X$ and all $n\in\N_0$. 
A Markov chain is said to be (time-) \textit{homogeneous} if transition probabilities do not depend on $n$; that is, if
\begin{equation*}
   \PP_X(X_{n+1}=y\mid X_n=x)=\PP_X(X_1=y\mid X_0=x) 
\end{equation*}
for all $x,y\in \X$ and all $n\in \N_0$. 
The stochastic matrix $T\in \R^{N\times N}$ defined as $T(x,y)\coloneqq \PP_X(X_1=y\mid X_0=x)$ is the \textit{transition matrix} of the homogeneous Markov chain $(X_n)_{n\in\N_0}$ and determines the behaviour of the process up to its initial distribution. 
\subsection{Expected hitting times and hitting probabilities}\label{sec:expected}
Fix a nonempty target set $A\subset\X$. The \textit{hitting time} of a homogeneous Markov chain is the random variable 
\begin{equation}
      \tau_A :=\inf\{n\ge0: X_n\in A\}\in\N_0\cup\{+\infty\}.
\end{equation} 
Conditioned on the chain starting at $x\in\X$, the \emph{expected hitting time} is
\begin{equation}
  h^T(x):=\EE_{\PP_T}\bigl[\tau_A \mid X_0=x\bigr],
\end{equation}
where we write $\PP_T\coloneqq \PP_X$. 
Intuitively, $h^T(x)$ is the average number of steps needed to reach $A$ when the process starts from $x$. Moreover, the vector of expected hitting times $h^T$ is the minimal nonnegative solution of a system of equations~\cite{norrisbook1997markov}:
\begin{equation*}
\begin{cases}
    h^T(x)=0&\text{if $x\in A$,}\\
     h^T(x)=1+\textstyle\sum\limits_{y\in \X}T(x,y)h^T(y)&\text{if $x\notin A$.}
\end{cases}
\end{equation*}
We define the \textit{hitting probability} of the homogeneous Markov chain $(X_n)_{n\in\N_0}$ as
\begin{equation}
    p^T(x)=\PP_T(\tau_A<+\infty\mid X_0=x).
\end{equation}
Hitting probabilities are the minimal nonnegative solution of
\begin{equation*}
    \begin{cases}
        p^T(x)=1&\text{if $x\in A$,}\\
        p^T(x)=\textstyle\sum\limits_{y\in \X}T(x,y)p^T(y)&\text{if $x\notin A$.}
    \end{cases}
\end{equation*}
There exists a well-known relation between expected hitting times and hitting probabilities~\cite{levin2009markov, norrisbook1997markov}: the expected hitting time is finite if and only if the hitting probability is one. For completeness, we provide the whole argument below and we split it into its two implications for later reference.
\begin{proposition}\label{prop:rightimpl}
    Let $\X$ be a finite state space and $T$ be a transition matrix. Then 
    \begin{equation}
        h^T(x)<+\infty \ \Rightarrow \ p^T(x)=1.
    \end{equation}
\end{proposition}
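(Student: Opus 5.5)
The plan is to argue directly from the definition of $h^T(x)$ as an expectation of the nonnegative extended-real random variable $\tau_A$, under the law $\PP_T(\cdot\mid X_0=x)$. The key observation is that $\tau_A$ takes the value $+\infty$ precisely on the event $\{\tau_A=+\infty\}$. Hence any positive mass on that event forces the expectation to be infinite. Finiteness of the state space and homogeneity play no role in this direction.

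Concretely, I will argue by contraposition. Suppose $p^T(x)<1$, so that $q:=\PP_T(\tau_A=+\infty\mid X_0=x)=1-p^T(x)>0$. Since $\tau_A\ge 0$, for every $M\in\N$ we have the pointwise bound
\begin{equation*}
\tau_A\;\ge\; M\,\one\{\tau_A=+\infty\}.
\end{equation*}
Taking expectations under $\PP_T(\cdot\mid X_0=x)$ and using monotonicity of the expectation for nonnegative random variables gives $h^T(x)\ge Mq$ for all $M$. Therefore $h^T(x)=+\infty$, contradicting the hypothesis. Equivalently, in the direct form: if $h^T(x)<+\infty$, then $Mq\le h^T(x)$ for all $M$, which forces $q=0$, i.e.\ $p^T(x)=1$.

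The only point needing care, and hence the main (mild) obstacle, is making sure the expectation of an extended-valued variable is understood in the usual Lebesgue sense, with the convention $\infty\cdot 0=0$ and $\infty\cdot q=\infty$ for $q>0$. With that convention one can equally write $h^T(x)\ge \EE[\tau_A\one\{\tau_A=\infty\}]=\infty\cdot q$ directly. No appeal to the minimal-solution characterisations of $h^T$ and $p^T$ is needed. Those characterisations will matter only for the converse implication, where finiteness of $\X$ is used.
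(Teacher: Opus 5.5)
Your proof is correct and takes the same approach as the paper: a nonnegative random variable with finite expectation is almost surely finite, so $\PP_T(\tau_A=+\infty\mid X_0=x)=0$ and hence $p^T(x)=1$. Your bound $\tau_A\ge M\,\one\{\tau_A=+\infty\}$ simply spells out this standard fact, which the paper states in one line.
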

\begin{proof}
    Recall that $h^T(x)=\EE_{\PP_T}\bigl[\tau_A \mid X_0=x\bigr]<+\infty$. Since the positive random variable $\tau_A$ has finite expected value, its probability of being infinite is zero and $p^T(x)=\PP_T(\tau_A<+\infty \mid X_0=x)=1$. \qed
\end{proof}
\begin{proposition}\label{prop:leftimpl}
    Let $\X$ be a finite state space and $T$ be a transition matrix. Then 
    \begin{equation}
        h^T(x)<+\infty \ \Leftarrow \ p^T(x)=1.
    \end{equation}
\end{proposition}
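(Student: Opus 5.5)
The plan is to exploit finiteness of $\X$ to turn ``hitting with probability one'' into ``hitting within a bounded number of steps with probability bounded away from zero'', and then to sum a geometric tail.

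First, introduce the set $S:=\{y\in\X : p^T(y)=1\}$, which contains $x$ by hypothesis. We claim $S$ is closed under transitions outside $A$: if $y\in S\setminus A$ and $T(y,z)>0$, then $z\in S$. To see this, use the system satisfied by $p^T$: $1=p^T(y)=\sum_z T(y,z)p^T(z)$ with all $p^T(z)\le 1$. Hence every $z$ with $T(y,z)>0$ must have $p^T(z)=1$. Consequently, a chain started at $x$ stays in $S$ almost surely until it hits $A$.

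Next, we extract uniform bounds. For each $y\in S$, $p^T(y)=1>0$, so there is a finite path of positive probability from $y$ to $A$. Without repeating states, such a path can be taken of length at most $N:=|\X|$. Let $\varepsilon_y>0$ be the probability that the chain started at $y$ hits $A$ within $N$ steps, and put $\varepsilon:=\min_{y\in S}\varepsilon_y>0$; this is a minimum over a finite set. Hence, for all $y\in S$,
\begin{equation*}
\PP_T(\tau_A> N\mid X_0=y)\le 1-\varepsilon .
\end{equation*}

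Now apply the Markov property \eqref{eq:markovprop} and homogeneity at the times $N,2N,\dots$. On the event $\{\tau_A>kN\}$, the state $X_{kN}$ lies in $S$ almost surely, by the closure step. It follows inductively that
\begin{equation*}
\PP_T(\tau_A>kN\mid X_0=x)\le (1-\varepsilon)^k .
\end{equation*}
Then
\begin{equation*}
h^T(x)=\sum_{n\ge0}\PP_T(\tau_A>n\mid X_0=x)\le N\sum_{k\ge0}(1-\varepsilon)^k=\frac{N}{\varepsilon}<+\infty .
\end{equation*}

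The main obstacle is the uniformity step. It fails without the closure of $S$: states with $p^T<1$ could be visited, and those give no lower bound on the chance of reaching $A$. So the key point is to verify carefully that the chain cannot leave $S$ before hitting $A$, which rests on $p^T$ being harmonic off $A$ and bounded by $1$. The finiteness of $\X$ is what makes $\varepsilon$ positive.
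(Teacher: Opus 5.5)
Your proof is correct, but it gets the key uniform bound by a different route than the paper.

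\textbf{The paper's route.} The paper takes $R(x)$ to be all states reachable from $x$. It uses an $n$-step total-expectation identity to argue that $p^T\equiv 1$ on $R(x)$, and then uses $p^T(y)=1$ at full strength. Since $\PP_T(\tau_A\le n\mid X_0=y)\uparrow 1$, for a fixed $\varepsilon\in(0,1)$ each $y\in R(x)$ has a horizon $m_{y,\varepsilon}$ with $\PP_T(\tau_A>m_{y,\varepsilon}\mid X_0=y)\le\varepsilon$. Then $M=\max_y m_{y,\varepsilon}$ works uniformly. The per-block failure probability can be made as small as one likes, but the block length is not explicit.

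\textbf{Your route.} You derive closure of $S=\{p^T=1\}$ from the one-step harmonic equation. You then use only $p^T(y)>0$ plus cycle removal, which gives the explicit block length $N=|\X|$ at the price of a possibly tiny success probability $\varepsilon$. Finiteness of $\X$ enters twice, through the path-length bound and through the minimum over $S$. The final geometric summation is the same in both proofs.

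\textbf{An advantage of your version.} You only propagate closure out of $S\setminus A$. The paper's identity \eqref{rompicapo} implicitly treats $\{\tau_A<+\infty\}$ as unaffected by discarding the first $n$ steps. That holds when $A$ is absorbing but can fail otherwise. For example, take $T(1,2)=T(2,3)=T(3,3)=1$ and $A=\{2\}$: then $p^T(1)=1$, yet $3\in R(1)$ and $p^T(3)=0$. This does not endanger the paper's conclusion. Since $\tau_A$ depends only on the path up to the hitting time, one may make $A$ absorbing, or restrict $R(x)$ to states reachable before hitting $A$. Your closure argument builds in exactly that restriction.
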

\begin{proof}
    Let $R(x)\coloneqq x \cup \{y\in \X: \exists n\in \N, \ \PP_T(X_n=y \mid X_0=x)>0\}.$ Fix $n\in \N$. Then
    \begin{align}
        1&=p^T(x)=\PP_T(\tau_A<+\infty\mid X_0=x)\nonumber\\
        &= \EE_{\PP_T}\left[\PP_T(\tau_A<+\infty \mid X_n) \mid X_0=x \right] \nonumber\\
        &=\sum_{y\in \X}\PP_T(X_n=y\mid X_0=x)\PP_T(\tau_A<+\infty\mid X_0=y), \label{rompicapo}
    \end{align}
    where we used the law of total expectation and Markov's property. It follows that, for all $y\in \X$ with $\PP_T(X_n=y\mid X_0=x)>0$ it must hold that $\PP_T(\tau_A<+\infty\mid X_0=y)=p^T(y)=1$. For the arbitrariness of $n$, we get $p^T(y)=1$ for all $y\in R(x)$.

    Fix $y\in R(x)$. Since $p^T(y)= \PP_T(\tau_A<+\infty\mid X_0=y)=1$, for all $\varepsilon>0$, there exists $m_{y,\varepsilon}\in\N$ such that for all $n\ge m_{y,\varepsilon}$ we have $\PP_T(\tau_A\le n\mid X_0=y)\ge 1-\varepsilon$ or, equivalently, $\PP_T(\tau_A> n\mid X_0=y)\le\varepsilon$.
Fix $0<\varepsilon<1$ and set $M=\max_{y\in R(x)} m_{y,\varepsilon}$. Then, by picking $n=M$, we have
\[
\PP_T(\tau_A> M\mid X_0=y)\le\varepsilon,
\]
for all $y\in R(x)$.
Let $k\in \N_0$. Then, using Markov's property, we have
\[
\PP_T(\tau_A>kM\mid X_0=x)\le\varepsilon^k.
\]
We can bound the expected value of $\tau_A$ using these tail bounds:
\begin{align*}
    h^T(x)&=\EE_{\PP_T}[\tau_A\mid X_0=x] = {\sum_{n\in \N_0}\PP_T(\tau_A>n\mid X_0=x)}\\
&= \sum_{k\in \N_0}\sum_{n=kM}^{(k+1)M-1}\PP_T(\tau_A>n\mid X_0=x)\\
&\le \sum_{k\in \N_0} M\,\PP_T(\tau_A>kM\mid X_0=x)\\
&\le M\sum_{k\in \N_0}\varepsilon^k= \dfrac{M}{1-\varepsilon}<+\infty. 
\end{align*}
\qed
\end{proof}
We seek to explore whether this, or a similar, relation holds true in the imprecise framework, when instead of a single (precise) process we are dealing with a set of homogeneous Markov chains.
\subsection{Imprecise Markov chains}\label{sec:IMCs}
Rather than a single homogeneous Markov chain $(X_n)_{n\in\N_0}$ governed by a transition matrix $T$, we now consider a family of these processes. Given a set $\T\subset \R^{N\times N}$, we consider the set $\Pcal$ made of all homogeneous Markov chains with transition matrix contained in the set $\T$. The set $\Pcal$ is said to be an \textit{imprecise Markov chain} (IMC).  
This choice of $\Pcal$ is justified by Krak et al.~\cite[{Theorem 18}]{krak2019hitting} showing that lower and upper hitting times and probabilities remain unchanged when the set $\Pcal$ is enlarged to include all (time-)inhomogeneous Markov processes that are compatible with $\T$.
In the following, we assume that the set $\T$ is nonempty, compact, convex, and with separately specified rows (SSR)~\cite{HermansSkulj2014, krak2019hitting}.

For an imprecise Markov chain $\Pcal$, we define lower and upper expectations:
\[
\underline{\EE}_\Pcal[\cdot\mid\cdot] \coloneqq \inf_{\PP\in \Pcal} \EE_\PP[\cdot\mid\cdot] \quad \text{and} \quad \overline{\EE}_\Pcal[\cdot\mid\cdot] \coloneqq \sup_{\PP\in \Pcal} \EE_\PP[\cdot\mid\cdot].
\]
These operators provide conservative, tight bounds for any quantity of interest.

For a fixed nonempty target set $A\subset \X$, we define the \textit{lower and upper expected hitting times} for the imprecise Markov chain parametrised by $\T$ as 
\begin{align*}
    &\ulh^\T(x)\coloneqq \underline{\EE}_\Pcal[\tau_A\mid X_0=x] = \textstyle\inf_{T\in \T} h^T(x);\\
    &{\olh^\T(x)\coloneqq \overline{\EE}_\Pcal[\tau_A\mid X_0=x] = \textstyle\sup_{T\in \T}h^T(x).}
\end{align*}
Analogously, we define \textit{lower and upper hitting probabilities} as
\begin{align*}
    &\ulp^\T(x)\coloneqq \underline{\EE}_\Pcal[\one_{\{\tau_A<+\infty\}}\mid X_0=x] = \textstyle\inf_{T\in \T}p^T(x);\\
    &{\olp^\T(x)\coloneqq \overline{\EE}_\Pcal[\one_{\{\tau_A<+\infty\}}\mid X_0=x] = \textstyle\sup_{T\in \T}p^T(x).}
\end{align*}
\section{Relation between Hitting Times and Hitting Probabilities for IMCs}\label{sec:relation}
We can partition the state space $\X$ into sets of states that have infinite lower or upper hitting time and states with lower or upper hitting probability equal to zero. To do so, we introduce the concept of reachability for IMCs~\cite{sangalli2025computing, debock2025convergent}: 
\begin{itemize}
    \item $x$ \textit{lower reaches} $A$, denoted by $x\lowereach A$, if for all $T\in \T$ there exists $\smash{n\in \N_0}$ such that $\smash{[T^n\one_A](x)>0}$;
    \item $x$ \textit{upper reaches} $A$, denoted by $x\uppereach A$, if there exist $T\in \T$ and $\smash{n\in \N_0}$ such that $\smash{[T^n\one_A](x)>0}$.
\end{itemize}
We can define the following sets of states: 
\begin{align*}
    \A_\T\coloneqq\{x\in A^c \mid x\notlowereach A \}, &&
    \U_\T\coloneqq\{x\in A^c\setminus \A_\T \mid  x\uppereach \A_\T\},\\
    {\W_{\T}} \coloneqq \{ x\in A^c \mid x\notuppereach A\}, &&
    \V_\T\coloneqq\{x\in A^c\setminus \W_\T \mid  x\lowereach \W_\T\}.
\end{align*}
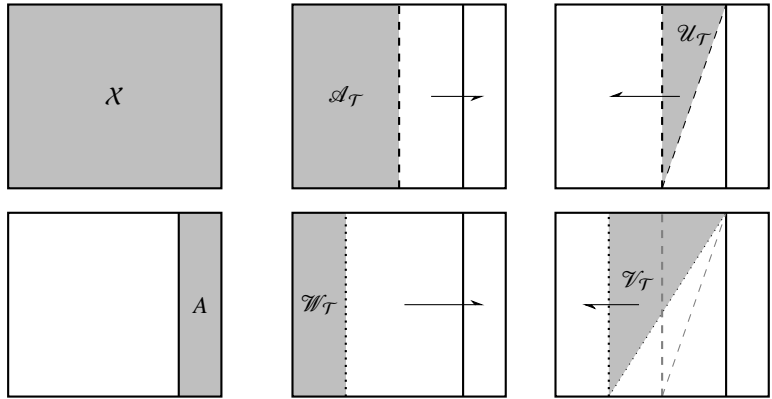
\begin{figure}[h]
    \centering
    \begin{tikzpicture}[yscale=1.63,xscale=.945] 
        \begin{scope}
            \initialize
            \pspace[fill=lightgray]
            \node at ($(ll)!.5!(ur)$) {\(\X\)};
        \end{scope}
        \begin{scope}[yshift=-1.7cm]
            \initialize
            \fill[lightgray] (All) rectangle (ur);
            \draw[thick] (All) -- (Aul);
            \node at (Ac) {\(A\)};
            \pspace
        \end{scope}
        \begin{scope}[xshift=4cm]
            \initialize
            \fill[lightgray] (ll) rectangle (ATur);
            \draw[thick] (All) -- (Aul);
            \draw[thick,dashed] (ATlr) -- (ATur);
            \node at (ATc) {\(\A_\T\)};
            \draw[->,>={Stealth[right]}] (cATc) -- (Ac);
            \pspace
        \end{scope}
        \begin{scope}[xshift=7.7cm]
            \initialize
            \fill[lightgray] (ATlr) |- (Aul) --cycle;
            \draw[thick] (All) -- (Aul);
            \draw[thick,dashed] (ATlr) -- (ATur);
            \draw[dashed] (ATlr) -- (Aul);
            \node at ([yshift=.5cm]UTc) {\(\U_\T\)};
            \draw[->,>={Stealth[right]}] ([xshift=-.2cm]UTc) -- (ATc);
            \pspace
        \end{scope}
        \begin{scope}[xshift=4cm,yshift=-1.7cm]
            \initialize
            \fill[lightgray] (ll) rectangle (WTur);
            \draw[thick] (All) -- (Aul);
            \draw[thick,dotted] (WTlr) -- (WTur);
            \node at (WTc) {\(\W_\T\)};
            \draw[->,>={Stealth[left]}] (cWTc) -- (Ac);
            \pspace
        \end{scope}
        \begin{scope}[xshift=7.7cm,yshift=-1.7cm]
            \initialize
            \fill[lightgray] (WTlr) |- (Aul) --cycle;
            \draw[thick] (All) -- (Aul);
            \draw[thick,dotted] (WTlr) -- (WTur);
            \draw[dotted] (WTlr) -- (Aul);
            \draw[thick,dashed,gray] (ATlr) -- (ATur);
            \draw[dashed,gray] (ATlr) -- (Aul);
            \node[above left=.1cm of VTc] {\(\V_\T\)};
            \draw[->,>={Stealth[left]}] ([xshift=-.4cm]VTc) -- (WTc);
            \pspace
        \end{scope}
    \end{tikzpicture}
    \caption{Illustration of key sets of states and their reachability characteristics.}\label{fig:key_sets_of_states}
\end{figure}

Sangalli et al.~\cite{sangalli2025meeting} showed that $\olh^\T(x)$ is infinite if and only if $x\in \A_\T \cup \U_\T$ and that $\ulh^\T(x)$ is infinite if and only if $x\in \W_\T \cup \V_\T$. This implies $\W_\T \cup \V_\T \subseteq \A_\T \cup \U_\T$. We also know from Sangalli et. al.~\cite{sangalli2025computing} that $\ulp^\T(x)=0$ if and only if $x\in \A_\T$ and that  $\olp^\T(x)=0$ if and only if $x\in \W_\T$. This implies $\W_\T\subseteq \A_\T$. 
The relations between the sets $\A_\T$, $\U_\T$, $\W_\T$ and $\V_\T$ are illustrated in Fig. \ref{fig:key_sets_of_states}.

From these relations, we also deduce 
\begin{align*}
    \olh^\T(x)<+\infty \ \Rightarrow \ \ulp^\T(x)>0, \quad\qquad 
    \ulh^\T(x)<+\infty \ \Rightarrow \  \olp^\T(x)>0 .
\end{align*}
The following result strengthens these implications.
\begin{theorem}\label{implication1}
    Under the previous assumptions, it holds that
    \begin{enumerate}[label=\arabic*), leftmargin=1cm]
    \item $\ \olh^\T(x)<+\infty \Rightarrow \ulp^\T(x)=1$;
    \item $\ \ulh^\T(x)<+\infty \Rightarrow \olp^\T(x)=1$.
\end{enumerate}
\end{theorem}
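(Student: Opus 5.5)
Both parts reduce to the precise results in Propositions~\ref{prop:rightimpl} and~\ref{prop:leftimpl}, together with attainment properties of the infima and suprema over the compact set $\T$.

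\emph{Part 1.} Assume $\olh^\T(x)<+\infty$. Then $h^T(x)\le\olh^\T(x)<+\infty$ for every $T\in\T$. Proposition~\ref{prop:rightimpl} applied to each fixed $T$ gives $p^T(x)=1$ for every $T\in\T$. Taking the infimum over $T\in\T$ yields $\ulp^\T(x)=\inf_{T\in\T}p^T(x)=1$. No compactness or SSR is needed here; the bound on the upper hitting time controls every precise chain uniformly.

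\emph{Part 2.} Assume $\ulh^\T(x)<+\infty$. Since $\ulh^\T(x)=\inf_{T\in\T}h^T(x)$ is a finite infimum, there exists at least one $T\in\T$ with $h^T(x)<+\infty$. (If the infimum is finite, some element of the set is finite; attainment is not even required.) For this $T$, Proposition~\ref{prop:rightimpl} gives $p^T(x)=1$. Hence $\olp^\T(x)=\sup_{T'\in\T}p^{T'}(x)\ge p^T(x)=1$. Since hitting probabilities are at most one, $\olp^\T(x)=1$.

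Both implications are therefore direct consequences of the precise implication $h^T(x)<+\infty\Rightarrow p^T(x)=1$. In part 1 it is applied to all $T\in\T$, and in part 2 to a single witnessing $T$. The only point requiring care is the identification $\ulh^\T(x)=\inf_{T}h^T(x)$ and its analogues, which the paper takes as definitions (justified by Krak et al.). I therefore expect no genuine obstacle.

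The reachability partition and the SSR assumption become relevant only for converse statements. One could alternatively argue via $x\notin\A_\T\cup\U_\T$, but the pointwise argument above is cleaner.
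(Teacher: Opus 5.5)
Your proof is correct and matches the paper's argument: Proposition~\ref{prop:rightimpl} applied to every $T\in\T$ gives part 1, and applied to a single $T$ with $h^T(x)<+\infty$ gives part 2. Your opening reference to Proposition~\ref{prop:leftimpl} and to attainment over the compact set $\T$ is unnecessary, since neither your argument nor the paper's uses them.
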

\begin{proof}
    Fix any $x\in \X$. If the upper hitting time is finite, then, for all $T\in \T$, we have that $h^T(x)<+\infty$. By Proposition \ref{prop:rightimpl}, we have $p^T(x)=1$ for all $T\in \T$, therefore $\ulp^{\T}(x)=\inf_{T\in \T} p^T(x)=1$.
    
    Similarly, if the lower hitting time is finite, there exists $T\in \T$ such that $h^T(x)<+\infty$. By Proposition \ref{prop:rightimpl}, we have $p^T(x)=1$ therefore $\olp^\T(x)=1$. \qed
\end{proof}
We now invert implications 1) and 2) in Theorem \ref{implication1} and either prove their correctness or find a counterexample in which they fail. The following result states that the converse of implication 1) holds true.
\begin{theorem}
    \label{th:uhtandlhp}
    The upper expected hitting time $\olh^\T$ is finite if and only if the lower hitting probability $\ulp^\T$ is one, i.e.
    \begin{equation}
        \olh^{\T}(x)<+\infty \ \Leftrightarrow \ \ulp^\T(x)=1
    \end{equation}  
\end{theorem}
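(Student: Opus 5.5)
The forward implication is exactly part 1) of Theorem~\ref{implication1}, so only the converse needs work. We will prove its contrapositive: if $\olh^\T(x)=+\infty$, then $\ulp^\T(x)<1$. The characterisation of Sangalli et al.~\cite{sangalli2025meeting} recalled above says that $\olh^\T(x)=+\infty$ if and only if $x\in\A_\T\cup\U_\T$. We therefore split into two cases.

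\emph{Case $x\in\A_\T$.} Here the characterisation of~\cite{sangalli2025computing} gives directly $\ulp^\T(x)=0<1$.

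\emph{Case $x\in\U_\T$.} By definition $x\uppereach\A_\T$, so there exist $T_1\in\T$, $n\in\N$ and $z\in\A_\T$ with $[T_1^n\one_{\{z\}}](x)>0$. We may take the path witnessing this to avoid $A$ before reaching $\A_\T$: truncate it at the first visit to $\A_\T$, and note that a path entering $A$ first would not help, since $A$ is disjoint from $\A_\T$ and we are only interested in the first entrance into $\A_\T$. Since $z\in\A_\T$, we have $z\notlowereach A$. Hence there is $T_2\in\T$ such that, under $T_2$, no state of $A$ is ever reached from $z$; in particular $p^{T_2}(z)=0$.

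The crux is to glue $T_1$ and $T_2$ into a single homogeneous matrix using SSR. The difficulty is that the states visited by the $T_1$-path and those visited by $T_2$ from $z$ may overlap. We resolve this by choosing one $T_2$ that works simultaneously for all of $\A_\T$. Since each $y\in\A_\T$ does not lower reach $A$, we have $\ulp^\T(y)=0$. By compactness and SSR the lower hitting probability is attained, via the standard result that $\ulp^\T$ is the minimal solution of the nonlinear system $\ulp(y)=\min_{T\in\T}[T\ulp](y)$~\cite{krak2019hitting}. Taking $T_2$ to be a row-wise minimiser gives $p^{T_2}\equiv 0$ on $\A_\T$, so $\A_\T$ is closed under $T_2$ and never hits $A$.

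Now define $T$ row by row: $T(y,\cdot)=T_2(y,\cdot)$ for $y\in\A_\T$, and $T(y,\cdot)=T_1(y,\cdot)$ otherwise. By SSR, $T\in\T$. The $T_1$-path from $x$ to $z$ visits only states outside $\A_\T$ before arriving at $z$, so under $T$ it still has positive probability. Once in $\A_\T$, the chain stays there under $T_2$-rows and never hits $A$. Therefore
\[
\PP_T(\tau_A=+\infty\mid X_0=x)\ \ge\ \PP_T(\text{path to }z)\,\bigl(1-p^{T}(z)\bigr)>0,
\]
and so $\ulp^\T(x)\le p^T(x)<1$.

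The main obstacle is the existence of the uniform $T_2$, that is, a single matrix making $\A_\T$ closed and non-hitting. If citing attainment is undesirable, this can be argued directly. For each $y\in\A_\T$, the set of admissible rows $T(y,\cdot)$ with support in $\A_\T$ is nonempty. Indeed, if every row available at $y$ put mass outside $\A_\T$, then, using that states outside $\A_\T\cup A$ lower reach $A$ together with a compactness/positivity argument, every $T$ would reach $A$ from $y$, contradicting $y\notlowereach A$. Choosing such a row for each $y$ and applying SSR yields $T_2$.
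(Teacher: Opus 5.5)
Your argument is correct, but it takes a different route from the paper. The paper proves the converse directly. From $\ulp^\T(x)=1$ it gets $p^T(x)=1$ for every $T\in\T$, and Proposition~\ref{prop:leftimpl} then gives $h^T(x)<+\infty$ for every $T$. The decisive ingredient is that the supremum defining $\olh^\T$ is attained by some $T^*\in\T$ \cite[Theorem 12]{krak2019hitting}, so $\olh^\T(x)=h^{T^*}(x)<+\infty$. Attainment is what rules out a family of finite values $h^T(x)$ with infinite supremum.

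You instead argue by contrapositive through the characterisation $\{\olh^\T=+\infty\}=\A_\T\cup\U_\T$ of \cite{sangalli2025meeting}. You then use SSR to glue an explicit $T\in\T$ with $p^T(x)<1$: rows on $\A_\T$ come from a matrix under which $\A_\T$ is closed, and all other rows from the matrix witnessing $x\uppereach\A_\T$. This avoids both the precise Proposition~\ref{prop:leftimpl} and the attainment of $\olh^\T$, at the price of leaning on a heavier cited characterisation. In return it produces a concrete witness and explains structurally why $\ulp^\T<1$ on $\U_\T$. Your fallback construction of the closed-set matrix is sound and needs no compactness. If every admissible row at $y\in\A_\T$ charged some $w\notin\A_\T$, then $w\in A$ or $w\lowereach A$, so every $T\in\T$ would reach $A$ from $y$, contradicting $y\in\A_\T$.

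One step needs tightening. Truncating the witnessing path at its first visit to $\A_\T$ does nothing about earlier visits to $A$. Under the literal definition of $\uppereach$, an $A$-avoiding path from $x$ to $\A_\T$ need not exist: every such path might pass through a state of $A$ whose row leads into $\A_\T$. In that situation, however, the cited characterisation of $\olh^\T$ would itself fail, since $x\in\U_\T$ while $\olh^\T(x)<+\infty$. So that characterisation can only hold if reachability of $\A_\T$ is understood along paths in $A^c$ (equivalently, with $A$ made absorbing, which changes no hitting time or probability). That convention is exactly what your step needs. State it explicitly instead of the truncation remark.
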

\begin{proof}
    We need to prove that $\ulp^\T(x)=1 \ \Rightarrow \ \olh^\T(x)<+\infty$ since we already know the converse.
    By hypothesis, we have $p^T(x)=1$, for all $T\in T$. Therefore, by Proposition \ref{prop:leftimpl}, $h^T(x)<+\infty$ for all $T\in \T$. By Krak et al.~\cite[Theorem 12]{krak2019hitting}, there exists a transition matrix $T^*\in \T$ such that $h^{T^*}=\sup_{T\in \T} h^T=\olh{^\T}$. Thus, we conclude that $\olh{^\T}(x)=h^{T^*}(x)<+\infty$. \qed
\end{proof}
The converse of implication 2) fails: the counterexample below shows that, despite the upper hitting probability being one, the lower hitting time is infinite.
\begin{example}\label{ex:unosun}
Let $\X=\{1,2,3\}$ be the state space, and let $\{2\}$ be the target set. Let the set of transition matrices on $\X$ be
    \begin{equation*}
        \T=\ch\left\{I, T_n=
        \begin{bmatrix}
            1-\frac{1}{n}-\frac{1}{n^2} & \frac{1}{n} & \frac{1}{n^2}\\
            0 & 1 & 0\\
            0 & 0 & 1\\
        \end{bmatrix}
        : n\ge 2
        \right\},
    \end{equation*}
    where by ``co'' we denote the convex hull and by $I$ the identity matrix. 
    The transition behaviour of $T_n$ is represented by the following graph:
    \begin{center}
    \begin{tikzpicture}[
        xscale=1.75,yscale=1.75,
        redNode/.style={circle, draw=red, fill=red!50, inner sep=2pt},
        blueNode/.style={circle,draw=blue, fill=blue!55,inner sep=2pt},
        every edge/.style={draw,looseness=0.3,
            postaction={
                decorate,
                decoration={
                    markings,
                    mark=at position 0.55 with {\arrow[#1]{Stealth}}
                }
            }
        }
    ]
        \node[blueNode,label=90:\textbf{1}] (1) {};
        \node[redNode,label=90:\textbf{2},left=2cm of 1] (2) {};
        \node[blueNode,label=90:\textbf{3},right=2cm of 1] (3) {};
        \path (1) edge[bend right] node[above] {$\nicefrac{1}{n}$} (2);
        \path (1) edge[bend left] node[above] {$\nicefrac{1}{n^2}$} (3);
        \path (1) edge[relative=false,out=230,in=-50,min distance=3.5ex] node[below, yshift=-1.2pt] {$1-\frac{1}{n}-\frac{1}{n^2}$} (1);
        \path (2) edge[relative=false,out=230,in=-50,min distance=3.5ex] node[below] {$1$} (2);
        \path (3) edge[relative=false,out=230,in=-50,min distance=3.5ex] node[below] {$1$} (3);
    \end{tikzpicture}
    \end{center}
    When starting from state ${1}$ and evolving under the transition matrix $T_n$, the expected hitting probability satisfies
    \begin{align*}
        p^{T_n}(1)=\left(1-\dfrac{1}{n}-\dfrac{1}{n^2}\right)p^{T_n}(1) +\dfrac{1}{n},
    \end{align*}
    Therefore, $p^{T_n}(1)=\nicefrac{n}{n+1} \to 1$ which is then the upper hitting probability. We observe that every transition matrix in $\T$ either disconnects states $1$ and $2$ or leaves a positive probability of never hitting the target. Therefore, the expected hitting time for every matrix in $\T$ is infinite, implying $\ulh^\T=+\infty$. \hfill$\diamond$
\end{example}
This counterexample arises because, as observed by Krak et al.~\cite{krak2019hitting}, there need not exist a transition matrix $T^*$ in $\T$ such that ${p^{T^*}=\olp^\T}$. In particular, in the present example
\[
{\lim_{n\to +\infty}p^{T_n}(1)=1\ne 0= p^I(1)=p \,^{\lim\limits_{n\to +\infty}T_n}(1),}
\]
and no other $T\in\T$ satisfies $p^{T}(1)=1$. 

We conclude the paper with Fig.~\ref{fig:relationship_overview}, which summarises the relationships between lower and upper hitting times and probabilities discussed and established in this work.
It uses the partitioning induced by the sets \(\A
_T\),  \(\U_\T\), \(\W_\T\), and \(\V_\T\), as illustrated in Fig.~\ref{fig:key_sets_of_states}.
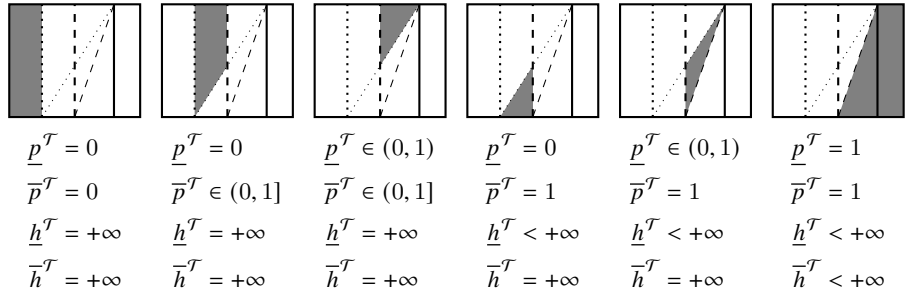
\begin{figure}
    \centering
    \newcommand{\partitioning}{
        \draw[thick] (All) -- (Aul);
        \draw[thick,dashed] (ATlr) -- (ATur);
        \draw[dashed] (ATlr) -- (Aul);
        \draw[thick,dotted] (WTlr) -- (WTur);
        \draw[dotted] (WTlr) -- (Aul);
        \pspace        
    }
    \begin{tikzpicture}[xscale=.58]
        \begin{scope}
            \initialize
            \fill[gray] (ll) rectangle (WTur);
            \partitioning
            \node[below=0.1cm of basec] {
                \(\begin{aligned}
                    \ulp^\T &= 0\\
                    \olp^\T &= 0\\
                    \ulh^\T &= +\infty\\
                    \olh^\T &= +\infty
                \end{aligned}\)
            };
        \end{scope}
        \begin{scope}[xshift=3.5cm]
            \initialize
            \fill[gray] (WTlr) |- (Aul) -- cycle;
            \fill[white] (ATur) rectangle (All);
            \fill[white] (ATlr) -| (Aul) --cycle;
            \partitioning
            \node[below=0.1cm of basec] {
                \(\begin{aligned}
                    \ulp^\T &= 0\\
                    \olp^\T &\in (0,1]\\
                    \ulh^\T &= +\infty\\
                    \olh^\T &= +\infty
                \end{aligned}\)
            };
        \end{scope}
        \begin{scope}[xshift=10.5cm]
            \initialize
            \fill[gray] (WTur) rectangle (ATlr);
            \fill[white] (WTlr) |- (Aul) -- cycle;
            \partitioning
            \node[below=0.1cm of basec] {
                \(\begin{aligned}
                    \ulp^\T &= 0\\
                    \olp^\T &=1\\
                    \ulh^\T &< +\infty\\
                    \olh^\T &= +\infty
                \end{aligned}\)
            };
        \end{scope}
        \begin{scope}[xshift=7cm]
            \initialize
            \fill[gray] (WTlr) |- (Aul) -- cycle;
            \fill[white] (WTur) rectangle (ATlr);
            \partitioning
            \node[below=0.1cm of basec] {
                \(\begin{aligned}
                    \ulp^\T &\in(0,1) \\
                    \olp^\T &\in(0,1]\\
                    \ulh^\T &= +\infty\\
                    \olh^\T &= +\infty
                \end{aligned}\)
            };
        \end{scope}
        \begin{scope}[xshift=14cm]
            \initialize
            \fill[gray] (ATlr) |- (Aul) --cycle;
            \fill[white] (WTlr) |- (Aul) -- cycle;
            \partitioning
            \node[below=0.1cm of basec] {
                \(\begin{aligned}
                    \ulp^\T &\in (0,1)\\
                    \olp^\T &= 1\\
                    \ulh^\T &< +\infty\\
                    \olh^\T &= +\infty
                \end{aligned}\)
            };
        \end{scope}
        \begin{scope}[xshift=17.5cm]
            \initialize
            \fill[gray] (ATlr) -- (Aul) -- (ur) |-cycle;
            \partitioning
            \node[below=0.1cm of basec] {
                \(\begin{aligned}
                    \ulp^\T &= 1\\
                    \olp^\T &= 1\\
                    \ulh^\T &< +\infty\\
                    \olh^\T &< +\infty
                \end{aligned}\)
            };
        \end{scope}
    \end{tikzpicture}
    \caption{Overview of the relationships between lower and upper hitting times and hitting probabilities.}\label{fig:relationship_overview}
\end{figure}

\subsubsection*{Acknowledgements}
This work has been partly supported by the PersOn project
(P21-03), which has received funding from Nederlandse Organisatie voor Wetenschappelijk Onderzoek (NWO).

\bibliographystyle{spbasic.bst}
\bibliography{refs.bib}

@incollection{HermansSkulj2014,
  author    = {Filip Hermans and Damjan Škulj},
  title     = {Stochastic Processes},
  booktitle = {Introduction to Imprecise Probabilities},
  editor    = {Thomas Augustin and Frank P.A. Coolen and Gert de Cooman and Matthias C.M. Troffaes},
  publisher = {Wiley},
  year      = {2014},
  chapter   = {11},
pages={258--278},
  doi       = {10.1002/9781118763117.ch11}
}

@inproceedings{krak2019hitting,
  author    = {Thomas Krak and Natan T'Joens and Jasper {De~Bock}},
  title     = {{Hitting Times and Probabilities for Imprecise Markov Chains}},
  booktitle = {Proceedings of the 14th International Symposium on Imprecise Probabilities: Theories and Applications (ISIPTA)},
  series    = {Proceedings of Machine Learning Research},
  volume    = {103},
  pages     = {265--275},
  year      = {2019},
  eprint    = {1905.08781},
  archivePrefix = {arXiv},
}

@article{krak2021comphit,
  author = {Thomas Krak},
  title = {{Computing expected hitting times for imprecise Markov chains}},
  journal = {Space Technology Proceedings},
  volume  = {8},
pages  = {185--205},
  year = {2021},
   doi = {10.1007/978-3-030-80542-5\_12}
}

@book{norrisbook1997markov,
  author    = {Norris, James R.},
  title     = {{Markov Chains}},
  series    = {Cambridge Series in Statistical and Probabilistic Mathematics},
  publisher = {Cambridge University Press},
  address   = {Cambridge, UK},
  year      = {1997},
  isbn      = {9780511810633},
  doi       = {10.1017/CBO9780511810633}
}

@book{levin2009markov,
  author    = {David A. Levin and Yuval Peres and Elizabeth L. Wilmer},
  title     = {{Markov Chains and Mixing Times}},
  publisher = {American Mathematical Society},
  year      = {2009},
  doi       = {10.1090/mbk/107}
}

@inproceedings{debock2025convergent,
  author       = {Jasper {De~Bock} and Alexander Erreygers and Floris Persiau},
  title        = {{A convenient characterisation of convergent upper transition operators}},
  booktitle    = {Proceedings of the 14th International Symposium on Imprecise Probabilities: Theories and Applications (ISIPTA)},
  series       = {Proceedings of Machine Learning Research},
  volume       = {290},
  year         = {2025},
pages={115--125},
doi = {10.48550/arXiv.2502.04509},
}

@inproceedings{sangalli2025meeting,
  author    = {Marco Sangalli and Erik Quaeghebeur and Thomas Krak},
  title     = {{Upper Expected Meeting Times for Interdependent Stochastic Agents}},
  booktitle = {Symbolic and Quantitative Approaches to Reasoning 
with Uncertainty},
  series    = {Lecture Notes in Artificial Intelligence (LNAI) },
  publisher = {Springer},
  year      = {2025},
pages={238-252},
  volume    = {16099},     
  doi       = {10.1007/978-3-032-05134-9\_17}     
}

@article{sangalli2025computing,
  author       = {Sangalli, Marco and Quaeghebeur, Erik and Krak, Thomas},
  title        = {{Computing Lower and Upper Hitting Probabilities for Imprecise Markov Chains}},
  journal      = {International Journal of Approximate Inference (IJAR)},
  year         = {2026},
  note         = {Under submission},
  doi          = {10.48550/arXiv.2512.16696}
}

\end{document}